\newlength{\mathfrwidth}
\newsavebox{\mathfrbox}
\newenvironment{mathframe}
    {\begin{lrbox}{\mathfrbox}\begin{minipage}{\mathfrwidth}\begin{center}}
    {\end{center}\end{minipage}\end{lrbox}\noindent\fbox{\usebox{\mathfrbox}}}
\newcommand{\floor}[1]{\lfloor#1\rfloor}
\theoremstyle{definition}
\numberwithin{equation}{section}
\begin{document}

\title{Majority Digraphs}


\author{Tri Lai}
\address{Institute for Mathematics and its Applications, University of Minnesota, Minneapolis MN 55455 USA.}
\curraddr{}
\email{tmlai@ima.umn.edu}
\thanks{}

\author{J\"{o}rg Endrullis}
\address{Vrije Universiteit Amsterdam,
1081 HV Amsterdam, The Netherlands,
 and
Department of Mathematics, Indiana University, Bloomington IN 47405 USA}
\curraddr{}
\email{j.endrullis@vu.nl}
\thanks{}

\author{Lawrence S.~Moss}
\address{Department of Mathematics, Indiana University, Bloomington IN 47405 USA}
\curraddr{}
\email{lsm@cs.indiana.edu}
\thanks{The first author was partially supported  by the Institute for Mathematics and its Applications with funds provided by the National Science Foundation (\#DMS-0931945). The third author was partially supported by a grant from the Simons Foundation (\#245591).}

\subjclass[2010]{Primary  05C62, 03B65}

\date{}

\dedicatory{}

\commby{}

\begin{abstract}
A \emph{majority digraph}
 is a finite simple digraph  $G=(V,\to)$
such that there exist finite sets $A_v$ for the  vertices $v\in V$
with the following property:
  $u\to v$ if and only if   ``more than half of the $A_u$ are $A_v$''.
That is, $u\to v$ if and only if  $ |A_u \cap A_v | > \frac{1}{2} \cdot |A_u|$.
We characterize the majority digraphs
 as the digraphs with the property that every directed cycle has a reversal.
 If we change $\frac{1}{2}$ to any real number $\alpha\in (0,1)$, we obtain the same class of digraphs.
  We apply
the characterization result   to obtain a result on the logic of assertions ``most $X$ are $Y$'' and the
standard connectives of propositional logic.
\end{abstract}

\maketitle

\section{Introduction}
This paper poses a problem in combinatorics
coming from logic.   For finite sets $X$ and $Y$, we say that \emph{most $X$ are $Y$} if
$|X \cap Y | > \frac{1}{2} | X|$.  If  \emph{most $X$ are $Y$}, then it need not be the case
that  \emph{most $Y$ are $X$}, but it would follow (trivially) that  \emph{most $X$ are $X$}.
If  \emph{most $X$ are $Y$} and  \emph{most $Y$ are $Z$}, then it need not be the case
that  \emph{most $X$ are $Z$}.
People with a background in logic
would ask questions  about sound  inferences involving \emph{most}:  are there any interesting
sound inferences at all?
Is there
a characterization of the collection of all sound inferences?
What is the complexity of that collection?
  We shall formulate
 the inference question precisely  and answer it in Section~\ref{section-logic}
near the
end of this paper.
The solution hinges on a result in elementary combinatorics, and this
result is the main mathematical contribution of this paper.

If $V$ is any finite set, and $A_v$ is a finite set for $v\in V$, then we obtain a digraph $G=(V,\to)$
in a natural way:  $u\to v$ iff \emph{most $A_u$ are $A_v$}.
We are only interested in digraphs without self-loops, so when we write $u\to v$ in this paper,
we tacitly assume that $u$ and $v$ are different.
A
\emph{majority digraph} is a finite  digraph
isomorphic to some digraph of this form.
 The characterization of sound inferences involving \emph{most}
  boils down to the characterization of majority digraphs.
We next state our  main result.

A \emph{two-way edge} in a digraph is just an edge $u\to v$ in the digraph such that also $v\to u$.
A \emph{one-way edge}  is an edge $u\to v$ such that  $v \not\to u$.
If $G$ is a  majority digraph
via the sets $A_v$,
 and if there is a one-way edge from $u$ to $v$, then $|A_v | > |A_u|$.
Thus $G$ cannot have  \emph{one-way cycles}: there are no
paths \begin{equation}
v_1 \to v_2 \to \cdots \to v_n = v_1
\label{vs}
\end{equation}
 such that
for $1\leq i < n$,
$v_{i+1}\not\to v_{i}$.
(There may be cycles with two-way edges.)
This point was noticed by
Chloe Urbanski~\cite{Urbanski}, and she conjectured that
the absence of one-way cycles characterizes majority digraphs.
This turns out to be true, and it is our main result.

More generally, for any $\alpha\in (0,1)$, we say that $G=(V,\to)$ is a \emph{proportionality $\alpha$-digraph}
if
there exist finite sets $A_v$ for $v\in V$
with the following property:
$$u\to v \quadiff  |A_u \cap A_v | >\alpha\cdot |A_u|.
$$
So a majority digraph is a  proportionality $\frac{1}{2}$-digraph.
Our second main result is that the characterization  result for
majority digraphs holds as well for
 proportionality $\alpha$-digraphs, for all $\alpha\in (0,1)$.

\subsection{Contents}
Section~\ref{section-one-way} has a  very general (and very easy)
representation result on digraphs with the property that
every directed cycle  has a \emph{reversal}.
 That is, for every path as in (\ref{vs}) there is some $1 \leq i < n$
such that $v_{i+1} \to v_i$.
(This is the same as having no  one-way cycles.)
In Section~\ref{section-rational}, we show that this condition characterizes majority digraphs;
indeed, it characterizes $\alpha$-proportionality digraphs for all rational $\alpha$.
Then in Section~\ref{section-real-alpha} we obtain the result for all real $\alpha\in (0,1)$.
The work on
rational $p/q$ in  Section~\ref{section-rational} is not merely a special case of the later results on
 real $\alpha$ in Section~\ref{section-real-alpha}.
The point is that to carry out our construction for irrational $\alpha$ necessitates using much larger
sets than the construction
 when $\alpha$ is a rational number.   Put differently, the result in Theorem~\ref{theorem-real} is a generalization
of the result in  Theorem~\ref{theorem-rational}, but the construction in Theorem~\ref{theorem-rational}
gives better bounds for the digraphs it constructs.

We conclude the paper in Section~\ref{section-logic}
by returning to the matter in logic with which we began.
Section~\ref{section-logic} may be read after Section~\ref{section-rational}.

\subsection{Preliminary}
\label{section-one-way}

For a fixed number $n$, an \emph{appropriate pair} is a pair $(S, T)$ such that
\begin{enumerate}
\item $S$ is a set of unordered pairs $\set{i,j}$ from the set of numbers $\set{1,\ldots, n}$.
\item $T$ is a set of ordered pairs $(i, j)$  from $\set{1,\ldots, n}$.
\item If $(i,j)\in T$, then $i < j$.
\item If $(i, j) \in T$, then $\set{i,j}\notin S$.
\end{enumerate}
Further,  every appropriate pair determines a digraph $G_{S,T}$.
The vertices of $G_{S,T}$ are the points of $\set{1,\ldots, n}$, and we put $i \to j$
iff either $\set{i,j}\in S$ or $(i, j) \in T$.

\begin{proposition}
Let $G$ be a digraph on $n$ vertices  with no one-way cycle.
Then there is an appropriate pair $(S,T)$ such that $G$ is isomorphic to $G_{S,T}$.
\label{proposition-appropriate}
\end{proposition}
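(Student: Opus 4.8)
The plan is to split the edges of $G$ into its two-way edges and its one-way edges and to let $S$ and $T$ handle these two kinds separately: $S$ will simply record the two-way edges as unordered pairs, while condition (3) on $T$ (that its one-way edges run from smaller to larger labels) will be arranged by choosing a suitable labeling of the vertices.

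First I would form the spanning subdigraph $H$ of $G$ whose arcs are exactly the one-way edges of $G$. The crucial observation is that a directed cycle in $H$ is precisely a one-way cycle of $G$, namely a closed walk $v_1 \to v_2 \to \cdots \to v_k \to v_1$ all of whose consecutive arcs satisfy $v_{i+1} \not\to v_i$. Since $G$ has no one-way cycle, $H$ has no directed cycle, so $H$ is acyclic. Every finite acyclic digraph admits a topological ordering of its vertices, i.e.\ a bijection of $V$ with $\{1, \ldots, n\}$ under which every arc of $H$ runs from a smaller number to a larger one. I would use this bijection to relabel the vertices of $G$; this relabeling is the isomorphism we seek, so from now on I may assume $V = \{1, \ldots, n\}$ and that every one-way edge $i \to j$ of $G$ has $i < j$.

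With this labeling in hand I would set $S = \{\,\{i,j\} : i \to j \text{ and } j \to i\,\}$ and $T = \{\,(i,j) : i < j,\ i \to j,\ j \not\to i\,\}$. Conditions (1) and (2) hold by construction; condition (3) holds because $T$ contains only one-way edges and the labeling sends these forward; and condition (4) holds because a single edge cannot be simultaneously two-way and one-way, so the pairs underlying $S$ and $T$ are disjoint. It then remains to check $G = G_{S,T}$ as digraphs on $\{1,\ldots,n\}$, which I would do by cases on an unordered pair $\{i,j\}$: if both $i \to j$ and $j \to i$, then $\{i,j\} \in S$ reproduces both arcs; if exactly one arc is present, say $i \to j$ with $i < j$, then $(i,j) \in T$ reproduces it, while the reverse arc is correctly absent because $(j,i) \notin T$ (this would require $j < i$) and $\{i,j\} \notin S$; and if neither arc is present, then neither $S$ nor $T$ supplies one.

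The argument is short, and there is no hard computation. The one genuinely substantive step---and the only place the hypothesis is used---is the identification of one-way cycles of $G$ with directed cycles of the one-way subdigraph $H$, which is exactly what licenses the topological sort. Everything else is the routine verification that the two bookkeeping sets $S$ and $T$ satisfy the four defining conditions and that together they recover $G$.
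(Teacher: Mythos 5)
Your proof is correct and follows essentially the same route as the paper: topologically order the vertices so that one-way edges run from smaller to larger labels, then let $S$ collect the two-way edges and $T$ the one-way edges. The paper compresses the key step into one sentence (``this is due to the assumption that there be no one-way cycle''), whereas you spell it out properly---the one-way subdigraph $H$ is acyclic precisely because a directed cycle in $H$ would be a one-way cycle of $G$, and acyclicity licenses the topological sort---so your write-up is a faithful, more detailed version of the same argument.
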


\begin{proof}
First, we may assume that the vertices of $G$ are $\set{1, \ldots, n}$.
We may list these in topological order.
So we have a sequence  $1, \ldots, n$, with the property  that if
$i\to j$ but $j\not\to i$, then $i < j$.   This is due to the assumption that there be no one-way cycle.
We can take $S$ to be the set of pairs corresponding to the two-way edges,
and $T$ the one-way edges.
\end{proof}

\section{The case when $\alpha$ is a rational number $p/q$}
\label{section-rational}

In this section, we represent digraphs with no one-way cycle as proportionality $p/q$-digraphs
for all natural numbers $0 < p < q$.   Taking $p/q = 1/2$, we see that digraphs
with no one-way cycles are majority digraphs.

The reader may wish to consult a worked example which we present in Section~\ref{section-example} below.

\rem{The more general case of an arbitrary real number $\alpha \in (0,1)$  is more difficult, and we
turn to it in Section~\ref{section-real-alpha} below.
}

For a sequence of sets $A_1, \ldots, A_n$
and for $1 \leq i < j \leq n$,  we write $A_i \sqcap A_j$ for $(A_i \cap A_j)\setminus \bigcup_{k\neq i, j} A_k $.
We call this the \emph{private intersection} of $A_i$ and $A_j$.
This is an example of what is sometimes called a \emph{zone} in a Venn diagram.

\begin{lemma}   Let
$p\leq q$ be natural numbers.
For all $n$, there are sets $B_1, \ldots, B_n$ such that
\begin{enumerate}
\item $|B_i |= p q^{n-1}$.
\item For $i \neq j$, $|B_i \cap B_j | =  p^2 q^{n-2}= \frac{p}{q}|B_i|$.
\item For $i\neq j$, $| B_i \sqcap B_j| = p^2 (q-p)^{n-2}$.
\end{enumerate}
\label{lemma-binary-digits}
\end{lemma}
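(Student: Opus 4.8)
The plan is to realize all $n$ sets simultaneously as ``coordinate cylinders'' inside one product universe, so that the three numerical conditions reduce to elementary counting with no inductive bookkeeping. Concretely, I would take the ground set to be $U = \set{1,\ldots,q}^n$, the set of all $n$-tuples $x = (x_1,\ldots,x_n)$ with each $x_k \in \set{1,\ldots,q}$, so that $|U| = q^n$. For each $i \in \set{1,\ldots,n}$ I define
\[
B_i = \set{\, x \in U : x_i \in \set{1,\ldots,p} \,},
\]
i.e.\ $B_i$ consists of exactly those tuples whose $i$-th entry is ``small''. The guiding intuition is that membership in $B_i$ is governed by a single coordinate and the coordinates behave independently, so the cardinality of an intersection of several $B_i$'s is just a product of per-coordinate counts.

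With this definition conditions (1) and (2) should fall out immediately. A tuple lies in $B_i$ iff its $i$-th coordinate is one of $p$ values while the remaining $n-1$ coordinates range freely, giving $|B_i| = p\cdot q^{n-1}$. For $i \neq j$, a tuple lies in $B_i \cap B_j$ iff both $x_i$ and $x_j$ are small and the other $n-2$ coordinates are free, so $|B_i \cap B_j| = p^2 q^{n-2}$, which is $\frac{p}{q}|B_i|$ as required.

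The one step that needs a moment's thought is condition (3), and it is precisely here that the base $q$ gets replaced by $q-p$. Expanding the definition of the private intersection,
\[
B_i \sqcap B_j = (B_i \cap B_j)\setminus \bigcup_{k\neq i,j} B_k
= \set{\, x \in U : x_i,x_j \in \set{1,\ldots,p}\ \text{and}\ x_k \in \set{p+1,\ldots,q}\ \text{for all } k\neq i,j \,}.
\]
Thus a tuple lies in $B_i \sqcap B_j$ exactly when its $i$-th and $j$-th coordinates are small (contributing $p$ choices each) while every other coordinate is ``large'', i.e.\ lies among the $q-p$ values $\set{p+1,\ldots,q}$; counting gives $|B_i \sqcap B_j| = p^2 (q-p)^{n-2}$. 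I expect this to be the only genuine obstacle, and it is conceptual rather than computational: one must read the complement $\bigcup_{k\neq i,j}B_k$ correctly as forcing \emph{all} remaining coordinates to be large, which is exactly what turns $q$ into $q-p$. The boundary cases ($n\le 2$, or $p=q$) are then handled automatically under the convention $0^0=1$, since in those situations the relevant unions and products are empty.
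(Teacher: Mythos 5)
Your proposal is correct and is essentially identical to the paper's own proof: the paper also takes the universe $\{1,\ldots,q\}^n$ and defines $B_i$ as the set of tuples whose $i$-th coordinate lies in $\{1,\ldots,p\}$, with the same coordinate-counting argument for all three conditions, including the observation that the private intersection forces every other coordinate into the $q-p$ ``large'' values.
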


\begin{proof}
Consider $S = \set{1, \ldots, q}$.  Let $$B_i
\quadeq
\set{(s_1 s_2 \cdots s_i \cdots s_n) \in  \set{1, \ldots, q}^n : s_i \in \set{1, \ldots, p}
}.$$
The first two parts are obvious.
$B_i \sqcap B_j$ is the set of sequences $(s_1 s_2 \cdots s_i \cdots s_n)$
so that $s_i$ and $s_j$ belong to $\set{1, \ldots, p}$,
and the other entries do not belong to this set.
\end{proof}

We shall use the following elementary observation:
\begin{equation}
\displaystyle{\frac{p + ar}{q + ar + s}} > \displaystyle{\frac{p}{q}}
\quadiff a >
\displaystyle{\frac{ps}{r(q-p)}}
\ .
\label{elementary}
\end{equation}

\begin{theorem}  Let $G$ be a digraph on $n$ vertices  with no one-way cycle.
Let $p < q$ be positive natural numbers.
Then $G$ is a proportionality $p/q$-digraph.
\label{theorem-rational}
\end{theorem}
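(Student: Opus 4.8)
The plan is to first invoke Proposition~\ref{proposition-appropriate} to replace $G$ by an isomorphic $G_{S,T}$ whose vertices $1,\ldots,n$ are in topological order, so that every one-way edge $(i,j)\in T$ satisfies $i<j$. This splits the requirement ``$u\to v$ iff $|A_u\cap A_v|>\frac{p}{q}|A_u|$'' into three kinds of ordered-pair conditions: for a two-way edge $\{i,j\}\in S$ both ratios $|A_i\cap A_j|/|A_i|$ and $|A_i\cap A_j|/|A_j|$ must exceed $\frac{p}{q}$; for a one-way edge $(i,j)\in T$ the first ratio must exceed $\frac{p}{q}$ while the reverse ratio $|A_i\cap A_j|/|A_j|$ must stay $\le\frac{p}{q}$; and for a non-edge both ratios must stay $\le\frac{p}{q}$. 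I would begin from the sets $B_1,\ldots,B_n$ of Lemma~\ref{lemma-binary-digits}, for which every pair sits \emph{exactly} at the threshold, $|B_i\cap B_j|=\frac{p}{q}|B_i|$. Thus the baseline realizes the empty digraph, and---this is the point of starting there---if the construction only ever \emph{enlarges} sets, then any pair whose common part we never enlarge keeps intersection ratio $\le\frac{p}{q}$; so the only non-edges needing watching are those accidentally touched by the shared blocks introduced below.

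Next I would switch edges on by inflating the relevant zones, reading the amounts off the elementary observation (\ref{elementary}). For a one-way edge $(i,j)$ this is exactly the gadget encoded by (\ref{elementary}): adjoin $ar$ fresh elements to the private intersection $B_i\sqcap B_j$, which raises $|A_i\cap A_j|/|A_i|$ above $\frac{p}{q}$, together with $s$ elements private to $B_j$; choosing $0<a\le ps/(r(q-p))$ then forces, by (\ref{elementary}), the reverse ratio $|A_i\cap A_j|/|A_j|$ to stay $\le\frac{p}{q}$, so $j\not\to i$. The padding makes $|A_j|>|A_i|$, consistent with the topological order. For a two-way edge $\{i,j\}$ I would instead add a purely shared block to $A_i\cap A_j$, lifting the intersection against both sizes at once.

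The crux is that these insertions are coupled through shared denominators: enlarging $A_i$ on account of one edge lowers every ratio $|A_i\cap A_k|/|A_i|$, so if each edge at $i$ drew on its \emph{own} private block then, adding the inequalities $|A_i\cap A_j|>\frac{p}{q}|A_i|$ over the out-neighbours $j$, one is forced into an out-degree of $i$ smaller than $q/p$---hopeless for high-degree vertices, and for $\alpha$ near $1$ even for degree $2$. The resolution I would use is to let the out-edges at a vertex \emph{share} a common core: a single block placed into $A_i$ and simultaneously into all its out-neighbours, so $|A_i|$ grows only once while every out-intersection profits. For one-way edges this is consistent, since the target padding forces each out-neighbour strictly larger than $i$, so a core exceeding $\frac{p}{q}|A_i|$ can still lie below $\frac{p}{q}|A_j|$, creating no spurious edge among the out-neighbours.

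The genuinely delicate part, and the step I expect to be the main obstacle, is the symmetric (two-way) subgraph, where no orientation makes one endpoint uniformly larger, so a shared core threatens to connect mutually non-adjacent common neighbours. I would handle it by padding every vertex heavily and then fixing the shared blocks and paddings so that, using (\ref{elementary}) with $a$ taken a definite amount \emph{above} the threshold $ps/(r(q-p))$ for the intended edges and the paddings large enough to hold every unintended pair at or below threshold, all the inequalities hold at once. Verifying that one choice of multiplicities makes every intended ratio strictly exceed $\frac{p}{q}$ and every unintended ratio fall at or below $\frac{p}{q}$, simultaneously at all vertices, is the heart of the argument, and the elementary observation (\ref{elementary}) is precisely the quantitative lever that makes this bookkeeping close. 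Specializing to $p/q=\frac{1}{2}$ then recovers the characterization of majority digraphs.
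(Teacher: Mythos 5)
There is a genuine gap. Your setup is right (the reduction via Proposition~\ref{proposition-appropriate}, starting from the exactly-at-threshold sets of Lemma~\ref{lemma-binary-digits}, and the use of (\ref{elementary}) as the quantitative lever), and your diagnosis of why the naive additive scheme fails is correct: if each out-edge at $i$ gets its own disjoint shared block of size $ar$, then since the baseline satisfies $q\,|B_i\cap B_j| = p\,|B_i|$ exactly, the inequality $q(|B_i\cap B_j|+ar) > p(|B_i|+d\cdot ar)$ forces the out-degree $d < q/p$. But your proposed repair --- a per-vertex core placed into $A_i$ and \emph{all} of its out-neighbours simultaneously --- does not close the argument; it creates a new coupling you never resolve. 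That core lies in $A_j \cap A_{j'}$ for every pair of out-neighbours $j, j'$ of $i$, and since each such pair starts exactly at threshold with $|A_j\cap A_{j'}| < |A_j|$, adding the same quantity to numerator and denominator pushes their ratio \emph{strictly above} $p/q$, manufacturing a spurious edge unless it is cancelled by yet more private padding, which in turn lowers the ratios of the intended edges at $j$, re-entering the same cycle of constraints. Your text acknowledges this (``the genuinely delicate part,'' ``the heart of the argument'') but only asserts that some simultaneous choice of multiplicities works; no such choice is exhibited or verified, and for the symmetric subgraph you give no mechanism at all. As written, the proposal is a plan whose hardest step is deferred, not a proof.

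The paper's construction avoids the coupling entirely by being \emph{subtractive} rather than additive, and this is the idea your proposal is missing. After scaling by $m$, one single common core $C$ of size $apn$ is added to \emph{all} the $A_i$ at once (together with $qi$ private points at vertex $i$), so that \emph{every} pair sits strictly above threshold, uniformly; the condition $(q-p)a>q$ makes this a consequence of (\ref{elementary}) with $r=pn$, $s=qn$. Then non-edges and one-way edges are produced by \emph{removing} points from the private intersections $A_i\sqcap A_j$ and returning them as private points to $A_i$ and $A_j$: since a private intersection is a zone meeting no third set, each such removal changes only the one intersection $|A_i\cap A_j|$ and no set sizes, so all adjustments are independent and can be performed simultaneously with no bookkeeping across pairs. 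For a non-edge the whole zone is removed (possible because $m$ is chosen with $mp^2(q-p)^{n-2} > apn$), killing both directions; for a one-way edge $(i,j)\in T$ with $i<j$, exactly $c = \lceil\frac{q-p}{q}apn\rceil - pi - 1$ points are removed, which by the index padding $qi$ versus $qj$ lands $|A_i\cap A_j|$ strictly above $\frac{p}{q}|A_i|$ but at or below $\frac{p}{q}|A_j|$. If you want to salvage your additive route, you would need to prove an analogue of this decoupling; the private-zone removal trick is what makes the simultaneous system of inequalities trivial rather than ``the heart of the argument.''
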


\begin{proof}
By Proposition~\ref{proposition-appropriate}, we find
an appropriate pair $(S,T)$
such that $G$ is isomorphic to $G_{S,T}$.

For our $p$ and $q$,
let $B_1, \ldots, B_n$ be as in Lemma~\ref{lemma-binary-digits}.
We shall modify these sets in several steps to obtain sets
 $A_1, \ldots, A_n$
such that
the following hold:
\begin{itemize}
\item[(a)]  If $\set{i,j}\in S$, then  $|A_i\cap A_j| > \frac{p}{q} |A_i|$ and $|A_i\cap A_j| > \frac{p}{q} |A_j|$.
\item[(b)] If $i < j$ and $(i,j)\in T$, then  $|A_i\cap A_j| > \frac{p}{q} |A_i|$
but $|A_i\cap A_j| \leq \frac{p}{q} |A_j|$.
\item[(c)] If $i < j$ and $(i,j)\notin T$, then
$|A_i\cap A_j| \leq \frac{p}{q} |A_i|$ and
$|A_i\cap A_j| \leq \frac{p}{q} |A_j|$.
\end{itemize}

Let $a$  and $m$ be natural numbers which are large enough so that
the following hold:
\begin{eqnarray}
(q-p )a & > & q\:;
\label{assumption-a}  \\
mp^2 (q -p)^{n-2} &  >  &  ap n.   \label{assumption-m1}
\end{eqnarray}
To begin,  take $m$ copies of all points in all sets $B_i$.
(That is, let $A_i = B_i\times \set{1,\ldots, m}$.)
This arranges that $|A_i| = m p q^{n-1}$ for all $i$,
and for $i \neq j$, $|A_i\cap A_j| = m p^2 q^{n-2}$,
and
 $|A_i\sqcap A_j| > ap n$.
We have used (\ref{assumption-m1}) here.

Add  a  single set $C$ of $a pn $
 points simultaneously to all $A_i$.
 That is, we have $C \subseteq \bigcap_i A_i$.
 (We are going to continue to call the sets $A_i$ rather than change the notation.)
This adds $  a pn$   points to all intersections $A_i \cap A_j$, so now these sets have size
$mp^2 q^{n-2 }+ a pn$.
But this addition leaves all private intersections $A_i\sqcap A_j$
unchanged.

Next, for each $i$, add  $qi$  fresh points to $A_i$.
In this step, we add different points
to the different $A_i$.  This step does not change (private) intersections,
it only increases the sizes of the sets.

When $\set{i,j}\in S$, the rest of our construction will not
alter the intersection $A_i\cap A_j$ or the sizes of $A_i$ and $A_j$.
 So in this case, we shall have at the end that
$$
\begin{array}{lclclcl}
\displaystyle{\frac{|A_i \cap A_j |}{|A_i | } } & = &
\displaystyle{ \frac{mp^2 q^{n-2 }+ a pn }{m p q^{n-1}  +apn + qi} }& \geq &
\displaystyle{ \frac{mp^2 q^{n-2 }+ a pn }{m p q^{n-1} + apn + q n} }& > &
\displaystyle{ \frac{p}{q}} \  .
\end{array}
$$
   We have used (\ref{elementary}) with $r = pn$ and $s = qn$,
   and also the assumption (\ref{assumption-a}) on $a$.
   Similarly, $|A_i\cap A_j | > \frac{p}{q} |A_j|$.

   We are left with two cases:  (a)  $i < j$  and $(i,j)\in T$ (and thus $\set{i,j}\notin S$)
and (b)  $i < j$  and $(i,j)\notin T$ and $\set{i,j}\notin S$.
For the pairs of the first type, we make a certain adjustment to the sets we have,
removing points from $A_i \sqcap A_j$
and returning them as separate copies in the two sets.
(So this type of  adjustment does not change the size of any $A_i$, but it decreases
the sizes of the intersections $A_i \cap A_j$.)
   It will turn out that the number of points
which we remove in this case depends on $i$.    And for the second type,
we remove \emph{all} the points in $A_i \sqcap A_j$ and return them as
separate copies in the two sets.
All of these adjustments  of either type
 may be carried out at the same time, and there is no need
 to order them.

The   case (b)
of $i < j$  and  also $(i,j)\notin T$ and $\set{i,j}\notin S$
is easier to handle, so let us look at this first.
The  private intersection $A_i \sqcap A_j$ has size $mp^2 (q-p)^{n-2}$.  Let us call this number $z$.
Take the entire private intersection
and remove it, returning separate copies of the same  size  $z$ to  $A_i$ and to $A_j$.
The removal decreased the size of the intersection $A_i\cap A_j$ by $z$.
By  (\ref{assumption-m1}), $apn - z  < 0$.   We calculate:
$$
\begin{array}{lclclcl}
\displaystyle{\frac{|A_i \cap A_j |}{|A_i | } } & = &
\displaystyle{ \frac{mp^2 q^{n-2 }+ a pn - z }{m p q^{n-1}  +apn + qi} }&<  &
\displaystyle{ \frac{mp^2 q^{n-2 } }{m p q^{n-1} } }&= &
\displaystyle{ \frac{p}{q}} \ .
\end{array}
$$
Similarly, $|A_i \cap A_j |/{|A_j | }  < p/q$.

Finally,  let   $i < j\leq n$  and $(i,j)\in T$.
The idea is to do something similar to
what we did in the last paragraph:
  remove a certain number of points from the private intersection $A_i\sqcap A_j$
  and then return the same number of points in separate copies to $A_i$ and $A_j$.
  But we want to remove a \emph{proper} subset of points,
  so that
more than $p/q$ of the $A_i$ are $A_j$, but  at most $p/q$ of the $A_j$ are $A_i$.
By (\ref{assumption-a}),
  $$
  \begin{array}{lclclcl}
  \displaystyle{\frac{p}{q}} apn + pi & <  & \displaystyle{\frac{p}{q}} apn + pn & < & apn\ .
   \end{array}
   $$
Let
$$\begin{array}{lcl}
c & = &  \displaystyle{\left\lceil\left(\frac{q - p}{q}\right)apn \right\rceil} - pi  - 1 \ .
\end{array}
$$
This has the property that
$$\begin{array}{lclcl}
  \displaystyle{\frac{p}{q}} apn + p i
  & <  & apn  - c  & \leq  &   \displaystyle{\frac{p}{q}} apn + p i  + 1
  \ .
\end{array}
$$
Note that $c < apn$, and as we have seen, $apn < |A_i \sqcap A_j|$.
We remove $c$ points from $A_i\sqcap A_j$ and return them  separately to $A_i$ and $A_j$.
So the intersection $A_i\cap A_j$ has size $ m p^2 q^{n-2}   + apn  - c  $.
To check that this works, we calculate:
  $$\setstretch{2}
  \begin{array}{lclcl}
 \displaystyle{\frac{p}{q}}( m p q^{n-1} + apn + qi) & = & m p^2 q^{n-2} +  \displaystyle{\frac{p}{q}} apn + p i
\\
 & <  &  m p^2 q^{n-2}   + apn  - c  \\
& \leq &   m p^2 q^{n-2} +  \displaystyle{\frac{p}{q}} apn + p i  + 1    \\
& \leq &  m p^2 q^{n-2} +  \displaystyle{\frac{p}{q}} apn + p j  &\mbox{(since $i < j$ and $1\leq p$)} \\
 & = &  \displaystyle{\frac{p}{q}}( m p q^{n-1} + apn + qj).
 \end{array}
 $$
  That is,
  $$\begin{array}{lclclcl}
\displaystyle{\frac{p}{q}} |A_i| & <    & | A_i \cap A_j | & \leq &  \displaystyle{\frac{p}{q}} |A_j|.
  \end{array}
  $$
We have achieved our goals (a), (b), and (c).  This completes the proof.
\end{proof}

\begin{remark}
Let us see how many points are needed to exhibit
a digraph without one-way cycles as  a $\frac{1}{2}$-digraph.
Suppose that $G$  has
 $n$ vertices and $e$ edges.
 We would like to know the size of $\bigcup_i A_i$
 using
the method of this section.
We have $p = 1$ and $q=2$,
and in Lemma~\ref{lemma-binary-digits}, $\bigcup_i A_i$ has size $2^n$.
Further, we may take  $a = 3$ and $m  = 3n$.
Following the proof, we get a universe of at most $3n (1 + 2^n) + n(n+1) + 3n e$ points.
So we get  $|\bigcup_i A_i| = O(n 2^n)$.
\end{remark}

\subsection{Example}
\label{section-example}
We illustrate all of the ideas in the proof of Theorem~\ref{theorem-rational}
  with an example.
  Consider the digraph $G$ shown in Figure~\ref{figure-graph-example} below.
We thus begin with $n = 4$, $p=1$, and $q = 2$.
The usual order $1 < 2 < 3 < 4$ has the property that if $i \to j$ but $j \not\to i$, then $i < j$.
From the graph, we have
$$\begin{array}{lcl}
S  &  = & \set{\set{1,2}, \set{2,3} }\\
T &  = & \set{ (1,3),  (3,4), (2,4)}  \\
\end{array}
$$
Lemma~\ref{lemma-binary-digits}
gives sets $B_1$, $\ldots$, $B_4$ with the property that all zones in their Venn diagram
have size $1$.    This is the first diagram in Figure~\ref{figure-Venn}.
Continuing,  we take $a$ to be $3$; this is the minimum number so that
(\ref{assumption-a}) holds.
Thus $a p n = 12$, and $m = 13$ is the smallest so that
(\ref{assumption-m1}) holds.    We continue by taking $13$ copies of all points,
and we rename the sets $A_1$, $\ldots$, $A_4$.

Next, we add $apn = 12$ points to $\bigcap  A_i$.
This is shown in the left-hand Venn diagram on the second row.
Continuing, we add $2$ points to $A_1$, $4$ to $A_2$, $6$ to $A_3$, and $8$ to $A_4$.
When we add to $A_i$ in this step, we are adding to $A_i \setminus \bigcup_{j\neq i} A_j$.
This is what we mean by a ``private'' addition.
This is shown in the right-hand Venn diagram on the second row.

We can check at this point that for $\set{i,j}\in S$,
$|A_i \cap A_j| > \frac{1}{2} |A_j|$.
For example, when $j = 3$, and $i = 2$,
we have $|A_3 \cap A_2| = 64$, and $A_2  = 120$.

Next, $(1,4)\notin T$ and $\set{1,4}\notin S$.
So we take $A_1 \sqcap A_4$,
remove all $13$ of its points, and then add $13$ points privately to $A_1$,
and finally $13$ other points privately to $A_4$.
The picture is the left diagram on the bottom row.
Then $|A_1 \cap A_4| = 51$.   This is smaller than $\frac{1}{2} |A_1| = 59 $
and also smaller than $\frac{1}{2} |A_4| =  62$.

It remains to take care of the pairs in $T$:  $(1,3)$,  $(3,4)$, and  $(2,4)$.
For $(1,3)$,
the value of $c$ is $\lceil\frac{1}{2}(12)\rceil - 1(1) - 1 = 4$.
 We remove $4$ points from $A_1 \sqcap A_3$ and
return them privately to $A_1$ and $A_3$.    This is how we get $|A_1 \sqcap A_3| = 13-4 = 9$
at the end.
For $(3,4)$, $c = \lceil\frac{1}{2}(12)\rceil - 1(3) - 1 = 2$.  We remove $2$ points from $A_3 \sqcap A_4$ and return them
privately to $A_3$ and $A_4$.   And at the end,  $|A_3\sqcap A_4| = 13-2 = 11$.
For $(2,4)$, $c = 3$.   We remove $3$ points from $A_2 \sqcap A_4$ and return them
privately to $A_2$ and $A_4$.   Thus  $|A_2\sqcap A_4| = 13-3 = 10$.
Then
totaling up the three private additions mentioned in this paragraph gives the sizes of the
sets $A_i \setminus \bigcup_{j\neq i} A_j$ for $i = 1, \ldots 4$.
The final Venn diagram is shown at the bottom right  of Figure~\ref{figure-Venn}.
It exhibits the digraph in Figure~\ref{figure-graph-example} as a majority digraph.

\begin{figure*}[tb]
\begin{mathframe}
\centering
 \makebox[\textwidth]{\includegraphics[width=2cm]{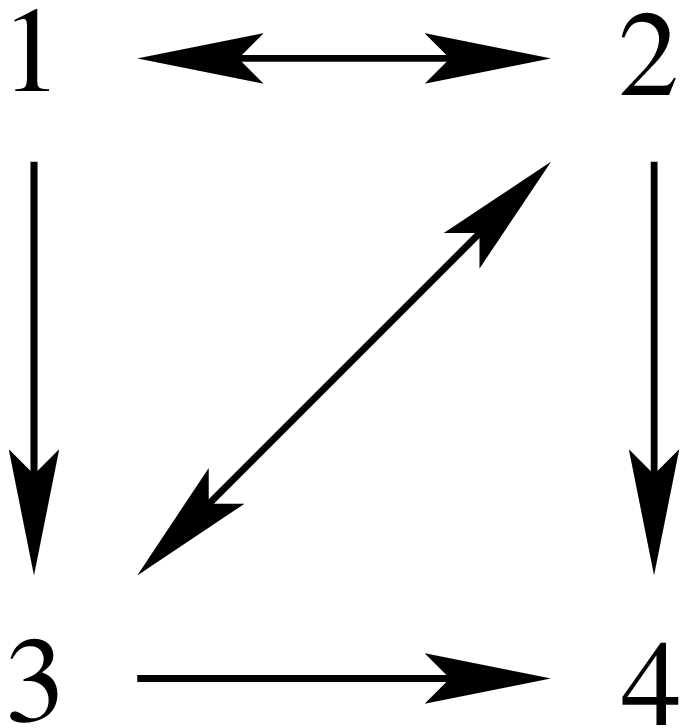}}
\caption{A digraph without one-way cycles used to illustrate the steps in Theorem~\ref{theorem-rational}.}
\label{figure-graph-example}
\end{mathframe}

\bigskip

\begin{mathframe}
\centering
\smallskip
 \makebox[\textwidth]{\includegraphics[width=.55\paperwidth]{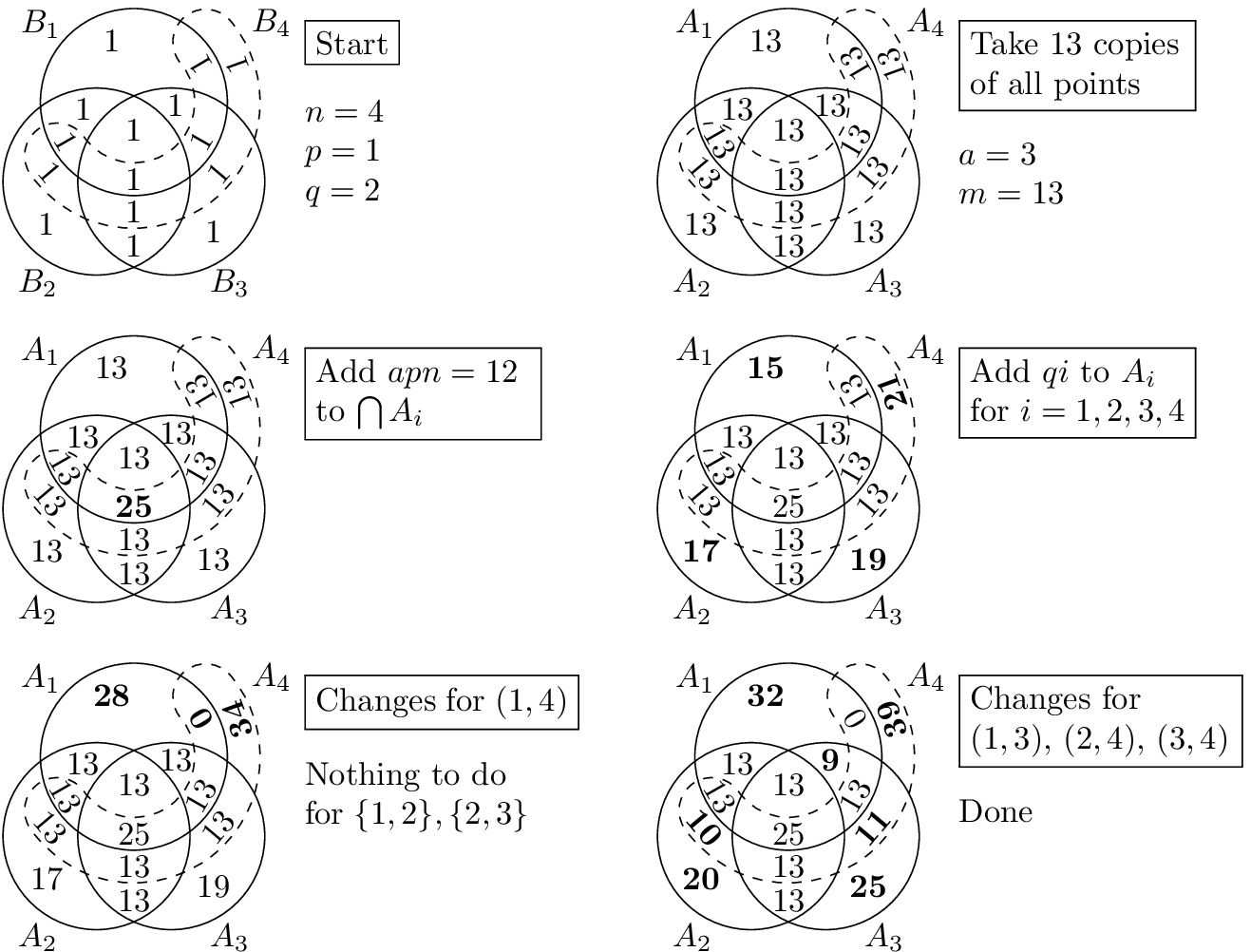}}
\caption{Six Venn diagrams illustrating the construction.  These are explained in detail in Section~\ref{section-example}.}
\label{figure-Venn}
\end{mathframe}
\end{figure*}

\section{Proportionality $\alpha$-digraphs}
\label{section-real-alpha}

\newcommand{\tups}{\Delta}
\newcommand{\powern}{P}

This section proves
Theorem~\ref{theorem-real}, a  generalization of  Theorem~\ref{theorem-rational}.
We begin with a few auxiliary definitions.
Fix a natural number $n$, and
let $\powern$  be the set of all subsets of  $\{1,\ldots,n\}$.
We define
\begin{align*}
  \tups(i,\ldots,i_\ell)  =   \tups(\{i,\ldots,i_\ell\}) = \{\; I \in \powern \mid \; \{i,\ldots,i_\ell\} \subseteq I \}.
\end{align*}
As the notation indicates, in this discussion we frequently omit set braces in the arguments of $\Delta$.
An  \emph{$n$-size function f} is a function $f : \powern \to \mathbb{R}_+$,
and we associate to $f$ the function $f^* : \powern \to \mathbb{R}_+$  defined by:
\begin{align}
  f^*(I) &= \sum_{J \in \tups(I)} f(J) \label{f:star}
\end{align}
for all $I \in \powern$.
The intuition is that $f(I)$ indicates the size of the private intersection
\begin{align*}  \bigcap_{i\in I} A_i \setminus \bigcup_{j\notin I} A_j
  \;,
\end{align*}
while $f^*(I)$  indicates the size of the intersection  $ \bigcap_{i\in I} A_i $.

Here are the canonical examples of size functions.

\begin{samepage}
\begin{lemma}\label{lem:eq:share}
  Let $n\in \mathbb{N}$ and $\alpha\in (0,1)$.
  There exists a size function $f : \powern \to \mathbb{R}_+$
  such that
  \begin{enumerate}
    \item $f^*(i) = 1$ for every $1 \le i \le n$.
    \item $f(i,j) = \alpha(1-\alpha)^{n-2}$ for every $1 \le i, j \le n$ with $i \ne j$.
    \item $f^*(i,j) = \alpha$ for every $1 \le i, j \le n$ with $i \ne j$.
  \end{enumerate}
\end{lemma}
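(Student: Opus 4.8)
The plan is to exhibit $f$ explicitly, by generalizing the binary-digit construction of Lemma~\ref{lemma-binary-digits} from a rational ratio $p/q$ to an arbitrary $\alpha \in (0,1)$. In that lemma the zone where exactly the coordinates indexed by a set $I$ are drawn from $\{1,\ldots,p\}$ (and all others from $\{p+1,\ldots,q\}$) has size $p^{|I|}(q-p)^{n-|I|}$. Dividing by $|B_i| = p q^{n-1}$ to normalize, and substituting $\alpha = p/q$, turns this size into $\alpha^{|I|-1}(1-\alpha)^{n-|I|}$. This reverse-engineering suggests the candidate
\[
  f(I) = \alpha^{\,|I|-1}(1-\alpha)^{\,n-|I|},
\]
which lies in $\mathbb{R}_+$ for every $I \subseteq \{1,\ldots,n\}$ because $\alpha > 0$ and $1-\alpha > 0$. (The value $f(\emptyset) = \alpha^{-1}(1-\alpha)^n$ plays no role in the three conditions, since $\emptyset$ is contained in no nonempty set and hence never contributes to $f^*(\{i\})$ or $f^*(\{i,j\})$.)

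Next I would compute $f^*$ by partitioning the supersets of a fixed set according to their cardinality. For a singleton $\{i\}$, the supersets of size $k$ are obtained by adjoining $k-1$ elements chosen from the remaining $n-1$, so there are $\binom{n-1}{k-1}$ of them, giving
\[
  f^*(\{i\}) = \sum_{k=1}^{n} \binom{n-1}{k-1}\,\alpha^{\,k-1}(1-\alpha)^{\,n-k}.
\]
Re-indexing with $m = k-1$ rewrites the right-hand side as $\sum_{m=0}^{n-1}\binom{n-1}{m}\alpha^{m}(1-\alpha)^{\,n-1-m}$, which the binomial theorem collapses to $(\alpha + (1-\alpha))^{n-1} = 1$. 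This is condition~(1).

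For a pair $\{i,j\}$, the supersets of size $k$ number $\binom{n-2}{k-2}$, so the same grouping and a single re-indexing yield
\[
  f^*(\{i,j\}) = \sum_{k=2}^{n} \binom{n-2}{k-2}\,\alpha^{\,k-1}(1-\alpha)^{\,n-k} = \alpha \sum_{m=0}^{n-2}\binom{n-2}{m}\alpha^{m}(1-\alpha)^{\,n-2-m} = \alpha,
\]
where one factor of $\alpha$ is pulled out before the binomial theorem is applied; this is condition~(3). Condition~(2) is then immediate, since the defining formula with $|I| = 2$ reads $f(\{i,j\}) = \alpha(1-\alpha)^{n-2}$ directly.

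The only genuinely nontrivial step is identifying the closed form for $f$; once the symmetric ansatz $f(I) = \alpha^{|I|-1}(1-\alpha)^{n-|I|}$ (depending only on $|I|$) is in hand, all three verifications are routine applications of the binomial theorem. I would therefore present the normalization of Lemma~\ref{lemma-binary-digits} as motivation for the formula and then simply check the three identities, noting that the many unconstrained values of $f$ on sets of size $\ge 3$ are exactly what gives us the freedom to meet the prescribed values of $f^*$ on singletons and pairs simultaneously.
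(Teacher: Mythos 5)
Your proposal is correct and follows essentially the same route as the paper: the paper also defines $f(I) = \alpha^{|I|-1}(1-\alpha)^{n-|I|}$ and verifies conditions (1) and (3) by grouping supersets by cardinality and applying the binomial theorem, with (2) immediate from the definition. Your added motivation from Lemma~\ref{lemma-binary-digits} and the observation that $f(\emptyset)$ is irrelevant are harmless extras; the core argument is identical.
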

\end{samepage}

\begin{proof}
  We define $f : \powern \to \mathbb{R}_+$ by:
  \begin{align*}
    f(I) &= \alpha^{|I|-1} (1-\alpha)^{n-|I|}
  \end{align*}
  for all $I \subseteq \powern$.
  Then for $1 \le i \le n$ we have
  \begin{align*}
    f^*(i)
     = \sum_{J \in \tups(i)} f(J)
    &= \sum_{J \in \tups(i)} \left( \alpha^{|J|-1} (1-\alpha)^{n-|J|} \right) \\
    &= \sum_{\ell = 1}^{n} \left( \binom{n-1}{\ell-1} \alpha^{\ell-1} (1-\alpha)^{n-\ell} \right) \\
    &= \sum_{\ell = 0}^{n-1} \left( \binom{n-1}{\ell} \alpha^{\ell} (1-\alpha)^{(n-1)-\ell} \right) \\
    &= \left(\alpha + (1-\alpha)\right)^{n-1} = 1.
  \end{align*}
  For $1 \le i, j \le n$ with $i \ne j$ we get $f(i,j) = \alpha(1-\alpha)^{n-2}$ by definition, and:
  \begin{align*}
    f^*(i,j)
     = \sum_{J \in \tups(i,j)} f(J)
    &= \sum_{J \in \tups(i,j)} \left( \alpha^{|J|-1} (1-\alpha)^{n-|J|} \right) \\
    &= \sum_{\ell = 2}^{n} \left( \binom{n-2}{\ell-2} \alpha^{\ell-1} (1-\alpha)^{n-\ell} \right) \\
    &= \alpha \cdot \sum_{\ell = 0}^{n-2} \left( \binom{n-2}{\ell} \alpha^{\ell} (1-\alpha)^{(n-2)-\ell} \right) \\
    &= \alpha \cdot \left(\alpha + (1-\alpha)\right)^{n-2} = \alpha.
  \end{align*}
  This concludes the proof.
\end{proof}

\begin{lemma}\label{lem:real}
Let $G$ be a digraph on $n$ vertices.
  Let $f : \powern \to \mathbb{R}_+$ be an  $n$-size function such that
  \begin{align*}
    i\to j &\implies  f^*(i,j) > \alpha \cdot f^*(i) \\
    i\not\to j &\implies  f^*(i,j) < \alpha \cdot f^*(i)
  \end{align*}
  for all vertices $i \ne j$ of $G$.
  Then $G$ is a proportionality $\alpha$-digraph.
\end{lemma}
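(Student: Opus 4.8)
The plan is to reduce to the case of an integer-valued size function, where the sets can be written down directly, and then to treat the general real-valued $f$ by approximating it with a rational size function closely enough that all of the hypotheses survive.

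First I would dispose of the integer case. Suppose $h : \powern \to \mathbb{N}$ is integer-valued. For each nonempty $I \subseteq \{1,\ldots,n\}$ I would choose a set $Z_I$ of exactly $h(I)$ fresh points, taking the $Z_I$ pairwise disjoint, and set $A_i = \bigcup_{I \ni i} Z_I$. A point of $Z_I$ lies in $A_i$ precisely when $i \in I$, so $Z_I$ is exactly the zone $\bigcap_{i \in I} A_i \setminus \bigcup_{j \notin I} A_j$; counting zones gives $|A_i| = \sum_{I \ni i} h(I) = h^*(i)$ and $|A_i \cap A_j| = h^*(i,j)$ for $i \ne j$. Hence $|A_i \cap A_j| > \alpha |A_i|$ holds iff $h^*(i,j) > \alpha\, h^*(i)$, so whenever $h$ satisfies the two implications of the lemma the finite sets $A_i$ exhibit $G$ as a proportionality $\alpha$-digraph.

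Next I would pass from the real-valued $f$ to such an $h$. Because $f^*(I) = \sum_{J \in \tups(I)} f(J)$ is a finite sum, each $f^*(I)$ depends linearly, hence continuously, on the at most $2^n$ values $f(J)$: perturbing every $f(J)$ by less than $\epsilon$ changes each $f^*(I)$ by less than $2^n\epsilon$. The hypotheses are finitely many strict inequalities, so they leave a positive margin: putting $\delta_{ij} = f^*(i,j) - \alpha f^*(i)$ when $i \to j$ and $\delta_{ij} = \alpha f^*(i) - f^*(i,j)$ when $i \not\to j$, and $\delta = \min_{i \ne j}\delta_{ij}$, I have $\delta > 0$. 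I would then pick a nonnegative rational-valued $g$ with $|g(J) - f(J)| < \epsilon$ for all $J$, with $\epsilon$ small enough that $2^n(1+\alpha)\epsilon < \delta$. Since $\alpha < 1$, the quantity $g^*(i,j) - \alpha g^*(i)$ differs from $f^*(i,j) - \alpha f^*(i)$ by at most $|g^*(i,j) - f^*(i,j)| + \alpha|g^*(i) - f^*(i)| < \delta$, and therefore keeps the same (nonzero) sign for every pair. Finally I would clear denominators: with $D$ a common denominator of the finitely many rationals $g(J)$, the function $h = D\cdot g$ is integer-valued, satisfies $h^*(I) = D\,g^*(I)$, and the strict inequalities are unaffected by scaling by the positive integer $D$. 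Applying the integer case to $h$ finishes the argument.

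The hard part will be precisely this last approximation step: one must be sure that finitely many strict inequalities involving a possibly irrational $\alpha$ are preserved when the real size function $f$ is replaced by a nearby rational one. This is exactly where the strictness in both implications of the hypothesis is needed, since it is the strictness that supplies the positive gap $\delta$; a version with non-strict inequalities could not be rounded in this way.
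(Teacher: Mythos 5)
Your proof is correct and follows essentially the same route as the paper's: both realize the size function by pairwise-disjoint zone sets $A(J)$ with $A_i = \bigcup_{J \in \tups(i)} A(J)$, and both rely on the positive margin supplied by the finitely many strict inequalities to absorb the error of passing from real to integer zone sizes. The only difference is packaging: the paper takes zone sizes $\floor{f(J)\cdot N}$ for a single large $N$ and tracks the rounding error through the ratios $|A_i \cap A_j|/|A_i|$, whereas you first replace $f$ by a nearby rational size function and then clear denominators so that the integer case can be applied exactly.
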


\begin{proof}
   Let $\epsilon > 0$ be a real number small enough such that for all $1 \le i,j \le n$, $i\ne j$:
   \begin{align}
     \frac{f^*(i,j)}{f^*(i)} < \alpha \;&\implies\;
     \frac{f^*(i,j)}{f^*(i)-\epsilon} < \alpha; \label{epsilon:smaller} \\
     \frac{f^*(i,j)}{f^*(i)} > \alpha \;&\implies\;
     \frac{f^*(i,j)-\epsilon}{f^*(i)} > \alpha. \label{epsilon:larger}
   \end{align}
   We choose $N \in \mathbb{N}$ such that
   \begin{align}
     \frac{2^{n-1}}{N} < \epsilon. \label{choice:N}
   \end{align}
   For every $J \in \powern$, let $A(J)$
   be a set of $\floor{ f(J) \cdot N }$  points, with $A(J) \cap A(J') = \emptyset$ for $J \neq J'$.
   Then
   \begin{align}
     f(J) \cdot N - 1 \quad<\quad |A(J)| \quad\le\quad f(J) \cdot N. \label{Avecj:size}
   \end{align}
   For $1 \le i \le n$ we define the set $A_i$ as follows:
   \begin{align*}
     A_i = \bigcup_{J \in \tups(i)} A(J).
   \end{align*}
   Then by~\eqref{f:star} and \eqref{Avecj:size} it follows for $1\le i\le n$ that:
   \begin{align}
     f^*(i)\cdot N - 2^{n-1}
     \;=\;
     \sum_{J \in \tups(i)} (f(J)\cdot N - 1)
     \quad<\quad
     \underbrace{ \sum_{J \in \tups(i)} |A(J)| }_{= |A_i|}
     \quad\le\quad
     f^*(i)\cdot N.
     \label{A_i:estimate}
   \end{align}
Similarly,  for $1 \le i < j \le n$ we have:
   \begin{align}
     f^*(i,j)\cdot N - 2^{n-2}
     \;=\;
     \sum_{J \in \tups(i,j)} (f(J)\cdot N - 1)
     \quad<\quad
     \underbrace{ \sum_{J \in \tups(i,j)} |A(J)| }_{= |A_i \cap A_j|}
     \quad\le\quad
     f^*(i,j)\cdot N.
     \label{A_iA_j:estimate}
   \end{align}
   Now from \eqref{A_i:estimate} and \eqref{A_iA_j:estimate} we conclude:
   \begin{align*}
     \frac{f^*(i,j)\cdot N - 2^{n-2}}{f^*(i)\cdot N}
     \quad<\quad
     \frac{|A_i \cap A_j|}{|A_i|}
     \quad<\quad
     \frac{f^*(i,j)\cdot N}{f^*(i)\cdot N - 2^{n-1}},
   \end{align*}
   and hence
   \begin{align}
     \frac{f^*(i,j) - \epsilon}{f^*(i)}
     \;\le\;
     \frac{f^*(i,j) - \frac{2^{n-2}}{N}}{f^*(i)}
     \quad<\quad
     \frac{|A_i \cap A_j|}{|A_i|}
     \quad<\quad
     \frac{f^*(i,j)}{f^*(i) - \frac{2^{n-1}}{N}}
     \;\le\; \frac{f^*(i,j)}{f^*(i) - \epsilon}. \label{ratio:estimation}
   \end{align}
   Now for all vertices $i \ne j$ of $G$ we have:
   \begin{align*}
     i\to j &\implies  f^*(i,j) > \alpha \cdot f^*(i)
       \quad\stackrel{\text{by \eqref{epsilon:larger} and \eqref{ratio:estimation}}}{\implies}\quad
       \alpha < \frac{f^*(i,j) - \epsilon}{f^*(i)} < \frac{|A_i \cap A_j|}{|A_i|};
     \\
     i\not\to j &\implies  f^*(i,j) < \alpha \cdot f^*(i)
       \quad\stackrel{\text{by \eqref{epsilon:smaller} and \eqref{ratio:estimation}}}{\implies}\quad
       \alpha > \frac{f^*(i,j)}{f^*(i)- \epsilon} > \frac{|A_i \cap A_j|}{|A_i|}.
   \end{align*}
   Hence $G$ is a proportionality $\alpha$-digraph.
\end{proof}

The remainder of the section is concerned constructs of
the appropriate size function for  a digraph $G$ with no one-way cycle.

\begin{theorem}
  If $G$ has no one-way cycle,
  then $G$ is a proportionality $\alpha$-digraph.
  \label{theorem-real}
\end{theorem}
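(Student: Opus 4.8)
The plan is to reduce everything to the construction of a single size function and then invoke Lemma~\ref{lem:real}. First I would apply Proposition~\ref{proposition-appropriate} to replace $G$ by an isomorphic $G_{S,T}$; this fixes a topological order $1,\ldots,n$ on the vertices so that every one-way edge $(i,j)\in T$ runs from a smaller to a larger index, while two-way edges give unordered pairs in $S$. By Lemma~\ref{lem:real} it then suffices to exhibit an $n$-size function $f$ with $f^*(i,j) > \alpha\cdot f^*(i)$ whenever $i\to j$ and $f^*(i,j) < \alpha\cdot f^*(i)$ whenever $i\not\to j$, for all $i\ne j$.

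I would build $f$ by perturbing the canonical size function $f_0$ of Lemma~\ref{lem:eq:share}, which has $f_0^*(i)=1$, $f_0^*(i,j)=\alpha$, and private-intersection mass $f_0(\{i,j\}) = \alpha(1-\alpha)^{n-2}$. Each perturbation mirrors a step of the rational construction in Theorem~\ref{theorem-rational}, but now carried out with real weights on Venn-diagram zones rather than with integer point counts. Concretely: (i) add a small amount $\beta$ to the top zone $f(\{1,\ldots,n\})$, which raises every $f^*(i)$ and every $f^*(i,j)$ by $\beta$; after this $f^*(i,j)-\alpha f^*(i)=\beta(1-\alpha)>0$, so momentarily every pair behaves like a two-way edge. (ii) For each $i$ add an increasing private amount $\gamma_i$, with $\gamma_1<\cdots<\gamma_n$, to the singleton zone $f(\{i\})$; this raises $f^*(i)$ to $1+\beta+\gamma_i$ but leaves all $f^*(i,j)$ with $j\ne i$ untouched, so that $f^*(i)<f^*(j)$ exactly when $i<j$. (iii) For a pair $i<j$ that is a non-edge remove the entire mass $f(\{i,j\})$, and for a one-way edge $(i,j)\in T$ remove a calibrated partial amount $\delta$; in each case the mass removed from zone $\{i,j\}$ is returned to the singleton zones $\{i\}$ and $\{j\}$. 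The crucial bookkeeping fact, verified exactly as in the analogous step of Theorem~\ref{theorem-rational}, is that this transfer leaves $f^*(i)$ and $f^*(j)$ unchanged and lowers only $f^*(i,j)$, by precisely the amount removed, while affecting no other $f^*(i,k)$.

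It then remains to choose the parameters so that the three edge types land correctly. For a two-way edge the intersection zone is left in place, so $f^*(i,j)-\alpha f^*(i)=\beta(1-\alpha)-\alpha\gamma_i$, which I keep positive in both orientations by demanding $\gamma_i<\beta(1-\alpha)/\alpha$ for every $i$. For a non-edge, removing the full mass $\alpha(1-\alpha)^{n-2}$ drives $f^*(i,j)-\alpha f^*(i)$, and its $j$-version, negative provided $\alpha(1-\alpha)^{n-2}>\beta(1-\alpha)-\alpha\gamma_i$. For a one-way edge $(i,j)\in T$ I need $\alpha f^*(i)<f^*(i,j)<\alpha f^*(j)$; since $f^*(i,j)$ is symmetric this forces $f^*(i)<f^*(j)$, which (ii) guarantees, and it amounts to choosing the removed amount $\delta$ in the open interval $(\beta(1-\alpha)-\alpha\gamma_j,\ \beta(1-\alpha)-\alpha\gamma_i)$, which is nonempty of length $\alpha(\gamma_j-\gamma_i)$. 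All of these requirements are met simultaneously by taking, say, $\gamma_i=i\eta$ with $\eta>0$ small and $\beta$ a suitable fixed multiple of $n\eta$; because we work over the reals, every $\delta$ demanded above is a small positive number comfortably below the available mass $\alpha(1-\alpha)^{n-2}$.

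The main obstacle worth flagging is not any single inequality but the \emph{simultaneous} satisfiability of all of them: the same weights $\beta$ and $\gamma_1<\cdots<\gamma_n$ must keep two-way pairs strictly above $\alpha$, push non-edges strictly below in both orientations, and leave a genuine open gap for each one-way edge. The asymmetry needed for one-way edges is exactly what the topological order buys us, namely $f^*(i)<f^*(j)$ for $i<j$, and the reason the real case is lighter than one might fear is that all integrality issues — the ones that, as noted in the introduction, would otherwise force much larger sets for irrational $\alpha$ — have already been quarantined inside Lemma~\ref{lem:real} through its choice of $\epsilon$ and $N$. Once such an $f$ is in hand, Lemma~\ref{lem:real} finishes the proof.
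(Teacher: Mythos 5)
Your proposal is correct and follows essentially the same route as the paper's own proof: perturb the canonical size function of Lemma~\ref{lem:eq:share} by adding mass to the top zone and strictly increasing private masses to the singleton zones, then transfer mass out of each pair zone (none for two-way edges, all of it for non-edges, a calibrated amount for one-way edges) back into the two singletons, and finish by invoking Lemma~\ref{lem:real}. Your explicit interval $(\beta(1-\alpha)-\alpha\gamma_j,\ \beta(1-\alpha)-\alpha\gamma_i)$ for the one-way removal amount is exactly the paper's choice of $\gamma(i,j)$ in \eqref{gamma:ij}, with the parameters $\beta,\gamma_i$ playing the roles of the paper's $\epsilon$ and $i\delta/n$.
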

\begin{proof}
  By Proposition~\ref{proposition-appropriate}
  there is an appropriate pair $(S,T)$
  such that $G$ is isomorphic to $G_{S,T}$.
  Without loss of generality, assume that $G = G_{S,T}$.

  We need the following variant of~\eqref{elementary}:
  \begin{align}
    \frac{\alpha + \epsilon}{1 + \epsilon + \delta} > \alpha
    \quadiff \frac{\epsilon(1-\alpha)}{\alpha} > \delta
    \ .
    \label{elementary:2}
  \end{align}

  Let $f : \powern \to \mathbb{R}_+$ be as in Lemma~\ref{lem:eq:share}, where we take $n$ to be the number of
  vertices in $G$, and $\alpha$ as in our theorem.
  Let $\epsilon$ and $\delta$ be defined as follows:
  \begin{align*}
    \epsilon &= \frac{\alpha(1-\alpha)^{n-1}}{2}
  &  \delta &=  \frac{\epsilon(1-\alpha)}{2\alpha}.
  \end{align*}
  Roughly speaking, we add $\epsilon$ commonly to the intersection of all vertices, and
  $\frac{i\delta}{n}$ privately to  vertex $i$ for all $i$.
  Formally, we define a size function $g : \powern \to \mathbb{R}_+$ by:
  \begin{align*}
    g(1,\ldots,n) &= f(1,\ldots,n) + \epsilon \\
    g(i) &= f(i) + \frac{i\delta}{n} &&\text{for all $1\le i\le n$;} \\
    g(I) &= f(I) &&\text{for all $I \in \powern$ with $1 < |I| < n$.}
  \end{align*}
  Then we have  for all $1\le i,j\le n$ with $i\ne j$:
  \begin{align*}
      g^*(i) &= 1 + \epsilon + \frac{i\delta}{n};\\
    g^*(i,j) &= \alpha + \epsilon.
  \end{align*}
  By~\eqref{elementary:2}, we have  that for all $1\le i, j\le n$ with $i\ne j$
  \begin{align}
    \frac{g^*(i,j)}{g^*(i)} &
    = \frac{\alpha + \epsilon}{1 + \epsilon + \frac{i\delta}{n}}
    \ge \frac{\alpha + \epsilon}{1 + \epsilon +\delta}
    > \alpha.
    \label{g:prop}
  \end{align}
  As a consequence, for each $1 \le i < j \le n$ there exists $\gamma(i,j) \in [0,\epsilon]$ such that
  \begin{align}
    \begin{aligned}
    && \frac{g^*(i,j) - \gamma(i,j)}{g^*(i)} &= \frac{\alpha + \epsilon - \gamma(i,j)}{1 + \epsilon + \frac{i\delta}{n}} > \alpha \\
    \mbox{ and } &&     \frac{g^*(i,j) - \gamma(i,j)}{g^*(j)} &= \frac{\alpha + \epsilon - \gamma(i,j)}{1 + \epsilon + \frac{j\delta}{n}} < \alpha. &
    \end{aligned}
    \label{gamma:ij}
  \end{align}
  Now define for all $1 \le i < j \le n$:\\
  \begin{minipage}{.49\textwidth}
  \begin{align*}
    h(i) & = g(i) + \sum_{j = 1}^{n} \varphi(i,j)\\
    h(i,j) & = g(i,j) - \varphi(i,j) \\
    h(I) & = g(I)  \mbox{, for all other $I$}
  \end{align*}
  \end{minipage}
  \begin{minipage}{.49\textwidth}
  \begin{align*}
   \varphi(i,j) =
     \begin{cases}
        0 & \text{if $i \to j$ and $j\to i$;} \\
        g(i,j) & \text{if $i\not\to j$ and $j\not\to i$;}\\
        \gamma(i,j) & \text{if $i\to j$ and $j\not\to i$.}
     \end{cases}
  \end{align*}
  \end{minipage}\\[2ex]
We only define  $h(i,j)$ when $i < j$.
  Note that $h^*(i) = g^*(i)$ for every $1 \le i \le n$, as
  we we add privately to $A_i$ as much as we remove from the private intersections $A_i \sqcap A_j$.

  We check the hypotheses of Lemma~\ref{lem:real} for $h$.
  First, if $i \to j$ and $j\to i$, then
  $$
  \begin{array}{lclclcl}
  \displaystyle{\frac{h^*(i,j)}{h^*(i)}} &= & \displaystyle{\frac{g^*(i,j)}{g^*(i)}}  &> &  \alpha.\\
  \end{array}
  $$
  Second, if $i \not\to j$ and $j\not\to i$, then
  $h^*(i,j) + g(i,j) = g^*(i,j)$.   Thus
    $$
  \begin{array}{lclclclcl}
  \displaystyle{\frac{h^*(i,j)}{h^*(i)}} &= &  \displaystyle{\frac{g^*(i,j) - g(i,j)}{g^*(i)}}
    &= &  \displaystyle{\frac{\alpha + \epsilon - \alpha(1-\alpha)^{n-2}}{1 + \epsilon + \frac{i\delta}{n}}}
      & < & \displaystyle{\frac{\alpha}{1 + \epsilon + \frac{i\delta}{n}}}
       & < & \alpha.
  \end{array}
  $$
  Finally,
    if $i \to j $ and $j \not\to i$, then $i<j$.    And by \eqref{gamma:ij},
    \begin{align*}
      \frac{h^*(i,j)}{h^*(i)}
      & =
       \frac{g^*(i,j) - \gamma(i,j)}{g^*(i)} > \alpha, \\
      \frac{h^*(i,j)}{h^*(j)}
      & =
       \frac{g^*(i,j) - \gamma(i,j)}{g^*(j)} < \alpha.
    \end{align*}
  This completes the proof.
\end{proof}

\begin{remark}
Let $G$ be a digraph on $n$ points with no one-way cycle.
If $\alpha \approx \frac{1}{2}$, then the method of Theorem~\ref{theorem-real} represents
a digraph $G$ a proportionality $\frac{1}{2}$-digraph with $|\bigcup A_g|
= O(n^2 2^{2n})$.  Here is the reasoning.
\begin{enumerate}\item
 Let $\alpha \sim 1/2$. Then  by Lemma 3.1, we have $f(I)=(1/2)^{n-1}$, for any $I$.

\item In the proof of Theorem 3.3.~with $\alpha \sim 1/2$, $\epsilon=(1/2)^{n+1}$ and $\delta=(1/2)^{n+2}$.

\item The maximum value of $g(I)$ is  at most $$\max\ f(I) + \epsilon~(1/2)^{n-1}+(1/2)^{n+1}=5(1/2)^{n+1}.$$

\item Next, we estimate size of $h(I)$.   It is less than
$$\max g(I)+n \times (\max \phi(i,j)) \sim 5(1/2)^{n+1}+n  (1/2)^{n-1}=(4n+5)(1/2)^{n+1}.$$

\item  Now $h(I)$ acts like the function $f(I)$ in Lemma 3.2. We take $N= 2^{2n}$. We have $|A(J)| < \max h(I) \times N \sim
(4n+5)2^{n-1}$.

\item We have $$| A(i)| < (2^n) \times (4n+5)2^{n-1}<(4n+5)2^{2n}.$$ Therefore, the size of $\bigcup A_i$ is
less than $(4n^2+5n)2^{2n}$.
\end{enumerate}

At the end of Section~\ref{section-rational}, we saw that the method of Theorem~\ref{theorem-rational}
represents $G$ as a majority digraph with $|\bigcup A_g| = O(n 2^n)$.

However, even though this suggests that  Theorem~\ref{theorem-real}  is not
as good a result as  Theorem~\ref{theorem-rational}
we emphasize that Theorem~\ref{theorem-real} works for all real $\alpha$.
We do not know how to extend the construction in  Theorem~\ref{theorem-rational}  to work on all real $\alpha$.
\end{remark}

\section{Application: the boolean  logic of ``most $X$ are $Y$''}
\label{section-logic}

\renewcommand{\most}[2]{\mbox{\sf M}({#1},{#2})}

We have characterized the
$\alpha$-propor\-tionality digraphs as those with no one-way cycle.
In particular, when $\alpha = 1/2$, we see that every digraph with no one-way cycle is a majority digraph.
We conclude with an application of this last result in logic.
What we discuss would be called a completeness theorem for the boolean  logic of
``most $X$ are $Y$''.   We start with a collection of \emph{one-place relation symbols} $X$, $Y$, $Z$, $\ldots$.
We then form \emph{atomic sentences} of the form $\most{X}{Y}$.
(Note that $X$ and $Y$ may be the same symbol here.   Up until now in this paper, we mainly worried about
such sentences when $X$ and $Y$ are \emph{different}.   So we have  a slight complication to keep in mind.)
$\most{X}{Y}$ is an abbreviation for {\sf Most $X$ are $Y$}.
Finally, we form \emph{sentences}
 from atomic sentences using the \emph{boolean connectives} of propositional logic,
namely negation ($\nott$), conjunction ($\andd$), disjunction ($\orr$), implication ($\iif$)
and bi-implication ($\iiff$).  So as just one example of a sentence, we would have
$$(\most{X}{Y} \andd \nott \most{X}{Z}) \orr \most{Y}{X}  \; .
$$
We call this logical language $\langmost$.
We are interested in the problem of \emph{inference} in $\langmost$.
To formulate this precisely, we need the notion of \emph{semantics}.
For this, we use \emph{models}.  A model of
$\langmost$ is a structure $\Model= (U, \semantics{ \ })$
 consisting of a finite set $U$
together with \emph{interpretations} $\semantics{X} \subseteq U$ of each one-place relation symbol $X$.
We then interpret our sentences in $\Model$ as follows
$$\begin{array}{lcl}
\Model\models \most{X}{Y} & \mbox{ iff } & |\semantics{X} \cap \semantics{Y}| > \frac{1}{2} |\semantics{X}| \; .\\
\end{array}
$$
We also read ``$\Model\models \most{X}{Y}$'' as  ``in the model $\Model$, most $X$'s are $Y$'s.''
If it is not the case that $\Model\models \most{X}{Y}$, then we write $\Model\not\models \most{X}{Y}$.
Observe that if  $\semantics{X}$ or $\semantics{Y}$
 is empty in a given model, then automatically $\Model\not\models \most{X}{Y} $.

We use $\phi$ and $\psi$   as variables  ranging over sentences in $\langmost$, and $\Gamma $ as a variable
denoting arbitrary finite sets of sentences.
Sentences with boolean connectives are given truth values  in the usual way.   For example,
$$\begin{array}{lcl}
\Model\models\nott \phi & \mbox{ iff } & \Model \not\models \phi\\
\Model\models \phi\andd\psi & \mbox{ iff } & \Model \models \phi  \mbox{ and }  \Model \models \psi\\
\end{array}
$$
We say that $\Model\models\Gamma$ if $\Model\models\psi$ for all $\psi\in\Gamma$.
The main semantic definition is:
$$\mbox{$\Gamma\models\phi$ if for all finite models $\Model$,
if $\Model\models\Gamma$, then $\Model\models\phi$.}
$$
This relation $\Gamma\models \phi$ between finite sets of sentences and single sentences is called the \emph{consequence relation}
of the logic.
Up until now, we have a semantic definition, having to do with all possible models of $\langmost$.
We shall define a proof-theoretic notion $\Gamma\proves \phi$ and then proving the
Soundness/Completeness Theorem:  $\Gamma\models\phi$ iff $\Gamma\proves\phi$.

One important feature of our semantics is that we restrict attention to \emph{finite} sets,
contrary to the usual practice in logic.   This is because we want to work with numerical proportions.

\begin{figure}[h]
\begin{mathframe}
$$\begin{array}{ll}
\mbox{Axioms} &  \mbox{all propositional tautologies} \\[1.5ex]
 & \mbox{$\most{X}{Y} \iif (\most{X}{X} \andd \most{Y}{Y})$}\\[1.5ex]
 &(\most{X_1}{X_2} \andd \most{X_2}{X_3}\andd \cdots \andd\most{X_n}{X_1} ) \\
&\qquad  \iif
(\most{X_2}{X_1} \orr \most{X_3}{X_2}\orr \cdots \orr \most{X_1}{X_n} )\\[1.5ex]
\mbox{Inference Rule} &  \mbox{from $\phi\iif\psi$ and $\phi$, infer $\psi$} \  (\mbox{Modus Ponens})\\[1.5ex]
\end{array} $$
\end{mathframe}
\caption{The logical system for $\langmost$, the  boolean logic of  ``most $X$ are $Y$''.
\label{fig-most}}
\end{figure}

The logical system that we use is defined in Figure~\ref{fig-most}.
By \emph{propositional tautologies} we mean   substitution instances of
propositional tautologies.  \rem{As examples, we have sentences such as
$$\begin{array}{l}
\most{X}{Y} \orr \nott \most{X}{Y} \\
(\most{X}{Y} \andd \most{Z}{Y}) \iif (\most{Z}{Y} \andd \most{X}{Y})
\end{array}
$$
The first of these is a substitution instance of the classical tautology $p\orr \nott p$, and
the second is an instance of $(p\andd q) \iif (q\andd p)$.
}
The next axiom just says that if $\most{X}{Y}$ in a given model, then $X$ and $Y$ must be non-empty
in the model. (Incidentally, in this discussion one should be sure to note the difference between two
uses of the $\to$ symbol: one for the edges in a digraph, and the other for a connective in $\langmost$.)
  Consequently, in the same model we have $\most{X}{X}$ and also $\most{Y}{Y}$.
This tells us that our axiom is \emph{sound}.

The key feature of the system  is the infinite collection of axioms
which together say that every cycle in the ``most'' relation has a reversal.    As we now know,
this characterizes majority digraphs.   Our logical result is
in essence a logical reformulation of this digraph-theoretic representation theorem.

We say that $\Gamma\proves \phi$ if there is a finite sequence of sentences such that each is
either a member of $\Gamma$ or an axiom, or else comes from earlier sentences in the sequence
using the one rule of the system, Modus Ponens.

As an example, the reader familiar with propositional logic will easily see that
$$\most{X}{Y}, \most{Y}{Z} \proves
\nott \most{Z}{X} \orr  \most{Y}{X}\orr  \most{Z}{Y} \orr  \most{X}{Z}.$$

\rem{
We shall show that
\begin{equation}
\most{X}{Y}, \most{Y}{Z} \proves
\nott \most{Z}{X} \orr  \most{Y}{X}\orr  \most{Z}{Y} \orr  \most{X}{Z}.
\label{ex-logic}
\end{equation}
Here is the formal derivation in our system:
$$\begin{nd}
\have{1} {\most{X}{Y}} \by{}{}
\have{1} {\most{Y}{Z}} \by{}{}
\have{12} {(\most{X}{Y} \andd  \most{Y}{Z} \andd \most{Z}{X})  \iif  (\most{Y}{X} \orr  \most{Z}{Y} \orr  \most{X}{Z}) }\by {}{}
\have{13} {\most{X}{Y} \iif (\most{Y}{Z}\iif (\phi\iif \psi ))} \by{}{}
\have{14} {\most{Y}{Z} \iif (\phi\iif   \psi)} \by{}{}
\have{20}{\phi \iif \psi} \by{}{}
\have{25} {\nott \most{Z}{X} \orr  \most{Y}{X}\orr  \most{Z}{Y} \orr  \most{X}{Z}} \by{}{}
\end{nd}
$$
In this, $\phi$ is the sentence on line 3, and $\psi$ is the sentence on line 7.
Lines $1$ and $2$ are our assumptions, the sentences on the left of the turnstile in (\ref{ex-logic}).
Line  $3$ is one of our axioms.
Line 4 is a (long and perhaps un-natural) propositional tautology.
Line 5 comes from lines 1 and 4 by Modus Ponens.
Line 6 comes from lines 2 and 5 by Modus Ponens.
Line 7 comes from lines 3 and 6 by Modus Ponens.
}

\begin{theorem}
For all finite sets $\Gamma\cup\set{\phi}$ of sentences in $\langmost$,
$\Gamma\proves\phi$ if and only if $\Gamma\models\phi$.
\label{theorem-completeness-boolean-most}
\end{theorem}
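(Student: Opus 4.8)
The plan is to establish the two directions separately: soundness ($\Gamma\proves\phi \Rightarrow \Gamma\models\phi$) is routine and I would dispatch it by induction on derivations, while completeness carries all the weight and rests on the representation Theorem~\ref{theorem-rational}. For soundness it suffices to check that every axiom is valid in all finite models and that Modus Ponens preserves validity. Propositional tautology instances are valid because compound sentences receive truth values exactly as in propositional logic. The axiom $\most{X}{Y}\iif(\most{X}{X}\andd\most{Y}{Y})$ is valid because $\Model\models\most{X}{Y}$ forces $|\semantics{X}\cap\semantics{Y}|>0$, so $\semantics{X}$ and $\semantics{Y}$ are nonempty, and any nonempty set $S$ has $|S\cap S|=|S|>\frac12|S|$. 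The cycle axioms are valid by the forward observation already made in the introduction: the \emph{most} relation on the distinct, nonempty-interpreted symbols of a model is a majority digraph, and a one-way edge $u\to v$ forces $|A_v|>|A_u|$, so there are no one-way cycles; a brief case split (treating cyclic sequences in which some consecutive symbols coincide, where the reversal is itself a true self-loop) handles substitution instances with repeated symbols.

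For the converse I would argue contrapositively and exploit that Modus Ponens is the only rule, so the system is nothing but propositional logic over the atomic sentences $\most{X}{Y}$ together with the two non-tautological schemes. Let $X_1,\ldots,X_k$ be the finitely many relation symbols occurring in $\Gamma\cup\set{\phi}$, and let $\mathcal{A}$ be the \emph{finite} set of instances of the two schemes mentioning only these symbols (taking, for the cycle scheme, only cycles through distinct symbols). Since every member of $\mathcal{A}$ is an axiom, $\Gamma\proves\phi$ iff $\Gamma\cup\mathcal{A}\proves\phi$; and since the system contains all propositional tautologies and uses only Modus Ponens, $\Gamma\cup\mathcal{A}\proves\phi$ holds iff $\phi$ is a truth-functional consequence of $\Gamma\cup\mathcal{A}$. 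Hence $\Gamma\not\proves\phi$ yields a truth assignment $v$ to the atomic sentences $\most{X_i}{X_j}$ that satisfies $\Gamma\cup\mathcal{A}$ and falsifies $\phi$; no compactness is needed because $\mathcal{A}$ is finite.

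The crux is then a realizability lemma: this $v$ is realized by a finite model, meaning there is a $\Model$ with $\Model\models\most{X_i}{X_j}$ iff $v(\most{X_i}{X_j})$ is true, for all $i,j$. Call $X_i$ \emph{live} if $v(\most{X_i}{X_i})$ is true. Because $v$ satisfies the second scheme, whenever $v(\most{X_i}{X_j})$ is true both endpoints are live. Form the digraph $G$ on the live symbols with $X_i\to X_j$ (for $i\neq j$) iff $v(\most{X_i}{X_j})$ is true; the cycle instances in $\mathcal{A}$ force $G$ to have no one-way cycle, since a minimal one-way cycle would run through distinct symbols and contradict the matching instance. By Theorem~\ref{theorem-rational} with $p/q=\frac12$, $G$ is a majority digraph via nonempty sets $A_{X_i}$. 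Interpret each live $X_i$ by $A_{X_i}$ and each non-live symbol by $\emptyset$. Then $\Model$ agrees with $v$ on distinct live pairs by construction; each live self-loop $\most{X_i}{X_i}$ holds because $A_{X_i}\neq\emptyset$; and every atom with a non-live endpoint is false in $\Model$ and, by the second scheme, assigned false by $v$. Thus $\Model$ realizes $v$, and since $\Gamma$ and $\phi$ involve only these atoms we get $\Model\models\Gamma$ and $\Model\not\models\phi$, so $\Gamma\not\models\phi$.

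I expect the realizability lemma to be the main obstacle. Theorem~\ref{theorem-rational} supplies its heart, but the genuine care lies in correctly matching the logical atoms to the digraph: isolating the live symbols from the empty ones, using the second axiom to rule out true atoms at empty symbols, and verifying that the self-loops $\most{X}{X}$ — which live outside the digraph's vocabulary and are governed purely by nonemptiness — come out exactly right. A secondary point needing attention is the passage from cycle-axiom instances (which permit repeated symbols) to the distinct-vertex ``no one-way cycle'' hypothesis of Theorem~\ref{theorem-rational}, which I would handle by extracting a minimal sub-cycle.
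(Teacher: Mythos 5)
Your proof is correct and follows essentially the same route as the paper: the heart of both arguments is the identical realizability construction --- form the digraph on the ``live'' symbols $X$ (those with $\most{X}{X}$ true), with edges given by the true atoms between distinct symbols, observe that the cycle axioms force the absence of one-way cycles, apply Theorem~\ref{theorem-rational} with $p/q=\tfrac12$, and interpret all remaining symbols as $\emptyset$. The only difference is the propositional bookkeeping at the front: the paper reduces to $\Gamma=\emptyset$ and passes to a consistent, complete disjunct of a disjunctive normal form, which carries exactly the same data as the falsifying truth assignment $v$ you extract via propositional completeness over the finite axiom set $\mathcal{A}$ (note that only the direction ``truth-functional consequence of $\Gamma\cup\mathcal{A}$ implies $\Gamma\proves\phi$'' of your stated equivalence is needed, and that direction is the one that holds).
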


\begin{remark}
The  completeness half of this result is false if we allow $\Gamma$ to be infinite.   The reason is that
if we take
$$\begin{array}{lcl}
\Gamma & = &\set{ \most{X_{i+1}}{X_{i}}   : i = 1, 2, \ldots }
\cup
\set{\nott\most{X_i}{X_{i+1}} : i = 1, 2, \ldots },
\end{array}
$$
then we have $\Gamma\models\phi$ for all $\phi$,
 even when $\phi$ is a contradictory sentence such as $\most{X}{X}\andd\nott\most{X}{X}$.
   (To see this,
suppose that $\Model$ satisfies every sentence in $\Gamma$. Then
$|\semantics{$X_1$}| > |\semantics{$X_2$}| >  \cdots$.
It follows that there are no finite models of $\Gamma$.
And since our semantics is only concerned with finite models, it follows that $\Gamma\models\phi$ for all $\phi$.)
But for  a contradictory $\phi$, $\Gamma\not\proves\phi$: proofs are finite, and it is easy to see from
the soundness that no finite subset of $\Gamma$ can derive a contradiction.
\end{remark}

\begin{proof}
Using standard facts,
we may restrict attention to the
case when $\Gamma$ is the empty set.
In effect, we can move sentences across both relations $\models$ and $\proves$.
So we are left to prove that
$\proves\phi$ if and only if
$\models \phi$.  In words, $\phi$ has a proof in our system
if and only if  $\phi$ is true in all models.

The soundness part is a routine induction on the lengths of proofs in the system, and we are going
to omit these details.
In fact, soundness of a logical system is a very weak property, and the main point of interest is
the completeness of the system.
 We argue for an equivalent assertion: if $\phi$ is \emph{consistent} in the logic
 (that is, if $\not\proves\nott\phi$),
then there is some (finite) model of $\phi$.

Let $\mathcal{F}$ be the finite set of one-place relational symbols $X$, $Y$, $\ldots$, which occur in $\phi$.
 Using the propositional part of the logic, we may assume that our consistent sentence $\phi$
is in
  \emph{disjunctive normal form over $\FF$}.  That is, $\phi$ may be written as $\psi\orr\cdots \orr\psi_n$, where $n \geq 1$
  and
(1) each $\psi_i$ is a conjunction of atomic sentences and their negations;
(2) for all $X, Y\in \FF$, $\psi_i$ either contains $\most{X}{Y}$ as a conjunct, or else it contains
$\nott\most{X}{Y}$ as a conjunct;
(3) each $\psi_i$ is consistent in the logic.
We show that  $\psi$ has a model.  (The same holds for the other $\psi_i$.)
  Then a model of $\psi$ is a model of $\phi$, and we are done.

Let
$$G  \quadeq \set{X :   \most{X}{X} \mbox{ is a conjunct of $\psi$}},
$$
And make $G$
 into a simple digraph
by setting (for $X\neq Y$)
$$X\to Y \mbox{ in $G$} \quadiff   \most{X}{Y} \mbox{ is a conjunct of $\psi$}.
$$
We claim that  every cycle in $G$ has a reversal.  For suppose that
in $G$,
$$Z_1 \to Z_2 \to Z_3 \to \cdots \to Z_n\to Z_1\; .
$$
Then $\psi$ has conjuncts $\most{Z_1}{Z_2}$, $\ldots$, $\most{Z_n}{Z_1}$.
If $G$ had no reversal, then $\psi$ would also have as conjuncts
$\nott \most{Z_2}{Z_1}$, $\ldots$, $\nott\most{Z_1}{Z_n}$.
And using the logic, we would see that $\proves\nott\psi$; that is, $\psi$
would be inconsistent.    We conclude from this contradiction that indeed
every cycle in $G$ has a reversal.

By Theorem~\ref{theorem-rational},
$G$ is a majority digraph.   This gives finite sets $A_X$ for $X\in G$
with the property that for $X\neq Y$,
\begin{equation}
X\to Y \quadiff   |A_X\cap A_Y| > \frac{1}{2}|A_X|\; .
\label{done}
\end{equation}
and hence we get a
model: let $U= \bigcup_X  A_X$,
and  let $\semantics{X} = A_X$ when $X\in G$, and $\semantics{X} = \emptyset$ when $X\notin G$.

 We claim that
$\Model\models\psi$.   For a conjunct  of $\psi$ of the form $\most{X}{Y}$,
we argue as follows: the first axiom of the logic having to do with {\sf M} implies
that both $X$ and $Y$ belong to $G$.  And then the construction arranged that
$\Model\models \most{X}{Y}$.

Consider a conjunct  $\nott\most{X}{Y}$.   If both $X$ and $Y$ belong to $G$,
and if $X \neq Y$,
then the construction arranged that $\Model\models \nott \most{X}{Y}$.
If either $X$ or $Y$ is not in $G$, then $\semantics{X} = \emptyset$ or
$\semantics{Y} = \emptyset$, and again we have $\Model\models \nott\most{X}{Y}$.
If $X, Y \in G$ and $ X = Y$, then $\most{X}{Y}$ is a conjunct of $\psi$ by definition of $G$,
and we contradict the consistency of $\psi$.

This completes the proof.
\end{proof}

We conclude with a remark on the satisfiability problem for $\langmost$.
By this we mean the question of whether a given sentence $\phi$ of $\langmost$
has a model $\Model$ in our sense: a finite set $U$ and sets $\semantics{X}$  which make $\phi$ true
according to the definition.  Note that every model $\Model$ also gives us a truth assignment to the
 atomic sentences $\most{X}{Y}$ of $\langmost$.
It is convenient to regard these atomic  sentences $\most{X}{Y}$ as ``variables'' and construct propositional
logic over them.   When we do this, then every model gives a truth assignment to these ``variables.''

\begin{proposition}
The satisfiability problem for $\langmost$ is NP-complete.
\end{proposition}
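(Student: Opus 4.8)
The plan is to establish both halves, membership in NP and NP-hardness. Hardness is routine; the interesting direction is membership, which rests squarely on our characterization of majority digraphs as the digraphs with no one-way cycle.

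For NP-hardness I would reduce from propositional satisfiability. Given a propositional formula $\Phi$ over variables $p_1,\ldots,p_k$, introduce $2k$ pairwise distinct one-place relation symbols $X_1,Y_1,\ldots,X_k,Y_k$ and let $\phi$ be the sentence of $\langmost$ obtained from $\Phi$ by replacing each $p_i$ with the atomic sentence $\most{X_i}{Y_i}$. This transformation is plainly polynomial. The key observation is that, because all the symbols are distinct, \emph{every} truth assignment to the atoms $\most{X_i}{Y_i}$ is realized by some model: for each $i$ separately choose disjoint finite sets making most $X_i$ be $Y_i$ (when we want the $i$-th atom true) or not (when we want it false), and take the disjoint union of these fragments. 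Hence $\Phi$ is satisfiable propositionally if and only if $\phi$ is satisfiable in $\langmost$, and SAT reduces to our problem.

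For membership in NP, note that a sentence $\phi$ over a finite symbol set $\mathcal{F}$ with $k = |\mathcal{F}|$ mentions only the $k^2$ atoms $\most{X}{Y}$. The algorithm guesses a truth assignment $v$ to these $k^2$ atoms and then runs two polynomial-time checks: (i) that $v$ makes $\phi$ true, evaluated as an ordinary propositional formula in these atoms; and (ii) that $v$ is \emph{realizable}, i.e.\ that some finite model $\Model$ satisfies $\Model\models\most{X}{Y}$ exactly when $v(\most{X}{Y})$ is true. The crux is that realizability is a polynomial-time condition. By soundness of our axioms, realizability forces the self-loop conditions (if $v(\most{X}{Y})$ is true then $v(\most{X}{X})$ and $v(\most{Y}{Y})$ are true) together with the absence of one-way cycles in the digraph $G$ whose vertices are the symbols $X$ with $v(\most{X}{X})$ true and whose edges are $X\to Y$, for distinct $X,Y$, with $v(\most{X}{Y})$ true. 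Both conditions are checkable in polynomial time: the self-loop conditions are a syntactic scan, and the absence of a one-way cycle is detected by retaining only the one-way edges of $G$ and testing that sub-digraph for a directed cycle by depth-first search.

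Conversely --- and this is where the real content lies --- these conditions are sufficient: if they hold then $G$ has no one-way cycle, so by Theorem~\ref{theorem-rational} (with $p/q = 1/2$) $G$ is a majority digraph, and putting $\semantics{X}=\emptyset$ for the remaining symbols yields a finite model realizing $v$, exactly as in the proof of Theorem~\ref{theorem-completeness-boolean-most}. Thus the guessed $v$ is realizable if and only if checks (i) and (ii) succeed, and correctness follows in both directions: if $\phi$ is satisfiable then any model induces a realizable $v$ satisfying $\phi$ that the algorithm can guess, while if the algorithm accepts some $v$ then realizability produces a model of $\phi$. Combining the two halves gives NP-completeness. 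The main obstacle --- showing realizability is polynomial-time decidable via the no-one-way-cycle criterion --- is precisely the representation theorem already in hand, so no new combinatorics is needed; what remains is only the routine verification that the reduction and the checks run in polynomial time.
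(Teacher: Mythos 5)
Your proposal is correct and takes essentially the same route as the paper: NP membership by guessing a truth assignment on the atoms $\most{X}{Y}$ and checking in polynomial time both that it satisfies the sentence and that it is realizable --- the latter via the self-loop conditions and the absence of one-way cycles in the induced digraph, with sufficiency supplied by Theorem~\ref{theorem-rational} --- and NP-hardness by translating each propositional variable to an atom over a fresh pair of distinct symbols, so that every truth assignment is realized by some finite model. The only cosmetic differences are that you reduce from SAT rather than 3SAT and that you spell out the realizability criterion more explicitly than the paper does.
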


\begin{proof}
Given a sentence $\phi$, one can guess an assignment $\alpha$ and verify that $\alpha$ both satisfies $\phi$
and also corresponds to a model in our sense.   This last point boils down to taking $\alpha$
and making a digraph $G_\alpha$ the way we did in the proof of Theorem~\ref{theorem-completeness-boolean-most}:
the vertices in $G_\alpha$ are the variables $X$ such that $\alpha(\most{X}{X}) = \mbox{true}$, and
$X\to Y$ in $G_\alpha$ iff $\alpha(\most{X}{Y}) = \mbox{true}$.   We can check in polynomial time that $G_{\alpha}$
has the property that every cycle has a reversal.

In the other direction, we reduce 3SAT to
our problem.
Suppose we are  given a 3SAT instance over a set $\set{x_1, \ldots, x_n}$ of boolean variables.
We are going to consider $\langmost$ formulated over a set of (twice as many) variables $X_1, \ldots, X_{2n}$.
Translate via $x_i\mapsto \most{X_{2i-1}}{X_{2i}}$.    For example,
a clause like $x_1 \orr \nott x_2 \orr x_3$ translates to
$$\most{X_1}{X_2} \orr \nott \most{X_3}{X_4} \orr \most{X_5}{X_6}.
$$
Translate a 3SAT instance $\alpha$ clause-by-clause in this way.   We only need to check that the translation
preserves  satisfiability; the converse is obvious.   If our original 3SAT instance were satisfiable,
we take a satisfying assignment $\alpha$ and convert it to a digraph $G_{\alpha}$ just as in our last paragraph.
The point is that the translation $\alpha^t$ arranges that
all of the edges in $G_{\alpha}$ are of the form $X_{2i-1} \to X_{2i}$ for some $i$.
The structure of $G_{\alpha}$ makes it trivially a majority digraph:
when $X_{2i-1} \to X_{2i}$ is an edge of $G_{\alpha}$,
let $A_{X_{2i-1}}$ be a singleton $\set{2i-1}$,
let $A_{X_{2i}}$ be this set $\set{2i-1}$ with two more points;
in all other cases, we take disjoint singletons.
A finite model corresponding to $G_{\alpha}$ satisfies $\alpha^t$.
\end{proof}

\section{Conclusion and Further Questions}

We have shown that a digraph $G$ with no one-way cycles is a proportionality $\alpha$-digraph for
all $\alpha\in (0,1)$.  But we do not know the smallest size of the sets $A_v$ or of their union,
as a function of $\alpha$ and $|G|$.

One could also
study digraphs which are representable by the ``exactly $\alpha$'' condition.   That is, given $\alpha\in (0,1)$,
which digraphs $G$ have the property that there are finite sets $A_v$ corresponding to the vertices of $G$
such that
  $u\to v$  in $G$ if and only if  $ |A_u \cap A_v | = \alpha \cdot |A_u|$?

For our last variations, suppose that $\alpha < \beta$ and that we ask of a digraph $G$ that there be
finite sets $A_v$ such that $u\to v$ in $G$ if and only if
 $ \alpha \cdot |A_u| < |A_u \cap A_v | < \beta \cdot |A_u|$.
 Let us call this condition \emph{$(\alpha,\beta)$-proportionality}.
 We do not know the exact characterization of
 the class of all $(\alpha,\beta)$-proportional digraphs.
One can show that if a digraph $G$ has no one-way cycles, then it is
 $(\alpha,\beta)$-proportional.    This is a corollary to
 the proof of   Theorem~\ref{theorem-real}
by  taking $\epsilon$ and $\delta$ sufficiently small,
all the numbers involved in Theorem~\ref{theorem-real}
will be so close to $\alpha$ that $\beta$ is irrelevant.
 But the converse is false: it is not necessary that a digraph have no one-way cycles
 in order for it to be  $(\alpha,\beta)$-proportional.
 For example,  take $\beta = .99$, $\alpha = .5$, and $G$ to be the
 one-way cycle $u\to v \to w\to u$.
 This digraph is
  $(\alpha,\beta)$-proportional:
  take
   $A_u = \set{ 0,1,2,3,4,5 }$,
  $A_v = \set{ 0,1,2,3,  6,7,8,9 }$,
and $A_w = \set{ 0,1,2,  6,7 }$.
Thus   $(\alpha,\beta)$-proportionality is weaker than the property of having no one-way cycles.
So we leave open the exact characterization.

Similarly, we would say that a digraph is \emph{$]\alpha,\beta[$-proportional}
if there are
finite sets $A_v$ such that
  $u\to v$ in $G$
  if and only if $|A_u \cap A_v | \le \alpha \cdot |A_u|$ or  $\beta \cdot |A_u| \le |A_u \cap A_v |$.
  Then $G$ is $(\alpha,\beta)$-proportional if and only if its complement $G^c$ is
  $]\alpha,\beta[$-proportional.   So the two concepts would have complementary characterizations.
  Again, we ask for a characterization of   $]\alpha,\beta[$-proportional digraphs.

There is
much more to be done on the logic of ``most'', since the language $\langmost$
of Section~\ref{section-logic}
was extremely limited: by adding interesting expressions to that language, one quickly arrives
at questions which seem interesting both from the viewpoints of logic and of combinatorics.
For a different contribution to this project, see~\cite{EM}.

\section*{Acknowledgment}  It is a pleasure to thank Ian Pratt-Hartmann for his interest in, and comments on, this paper.
We also thank several anonymous reviewers.

\bibliographystyle{amsplain}

\end{document}